\definecolor{gr}{rgb}   {0.,   0.69,   0.23 }
\definecolor{bl}{rgb}   {0.,   0.5,   1. }
\definecolor{mg}{rgb}   {0.85,  0.,    0.85}
\definecolor{or}{rgb}   {0.9,  0.5,   0.}
\definecolor{webred}{rgb}{0.75,0,0}
\definecolor{webgreen}{rgb}{0,0.75,0}
\newtheorem{theorem}{Theorem}[section]
\theoremstyle{definition}
\theoremstyle{remark}
\newtheorem{remark}[theorem]{Remark}
\newcommand{\ba}{\begin{array}}
\newcommand{\ea}{\end{array}}
\newcommand{\Bk}{\color{black}}
\newcommand{\N}{\mathbb{N}}
\newcommand{\R}{\mathbb{R}}
\newcommand{\rd}{{\mathrm d}}
\newcommand{\cF}{\mathcal{F}}
\newcommand{\cT}{\mathcal{T}}
\newcommand{\gh}{\mathfrak{h}}
\newcommand{\bel}{\begin{equation} \label}
\newcommand{\ee}{\end{equation}}
\newcommand\dom{\operatorname{Dom}}
\begin{document}\setul{2.5ex}{.25ex}
\title[]{Band functions of Iwatsuka models : power-like and flat magnetic fields}
\author[P.\ Miranda]{Pablo Miranda}
\address{Departamento de Matem\'atica y Ciencia de la Computaci\'on, Universidad de Santiago de Chile, Las Sophoras 173. Santiago, Chile.}\email{pablo.miranda.r@usach.cl}
\author[N.\ Popoff]{Nicolas Popoff}
\address{Universit\'e de Bordeaux, IMB, UMR 5251, 33405 TALENCE cedex, France}
\email{Nicolas.Popoff@math.u-bordeaux1.fr}
\email{}

\maketitle

\begin{abstract}
In this note we consider the Iwatsuka model with a postive increasing magnetic field having finite limits. The associated magnetic Laplacian is fibred through partial Fourier transform, and, for large frequencies, the band functions tend to the Landau levels, which are thresholds in the spectrum. The asymptotics of the band functions is already known when the magnetic field converge polynomially to its limits. We complete this analysis by giving the asymptotics for a regular magnetic field which is constant at infinity, showing that the band functions converge now exponentially fast toward the thresholds. As an application, we give a control on the current of quantum states localized in energy near a threshold.  

\end{abstract}

\section{The Iwatsuka model}

In this article we review and complete some results about the band function of the Iwtasuka model with an increasing positive magnetic field having finite limits.
Assume that the magnetic field $b:\R^{2}\to (0,+\infty)$ depends only of one variable in the sense that $b(x,y)=b(x)$. We assume moreover that $b$ is $C^{0}$, increasing, and have finite limits $b^{\pm}$ as $x\to\pm\infty$, with $0<b_{-}<b_{+}$.  The model is gauge invariant, and we choose 
the magnetic potential 
$$A(x,y):=(0,a(x)); \quad \textrm{with}  \quad a(x):=\int_{0}^{x}b(t) \rd t.$$\Bk
The magnetic Laplacian is then defined by
$$H_{0}:=(-i\nabla-A)^2=-\partial_{x}^2+(-i\partial_{y}-a(x))^2$$
acting in $L^2(\R^2)$. 

Historically, this operator was introduced in order to provide an example of a magnetic Laplacian with absolutely continous spectrum see \cite{Iwa85}. Then, this has been proved under various conditions on $b$, see \cite{Iwa85, ManPur97,ExKov00, Tus16}, but the fact that this is true as long as $b$ is non-constant is still open, since \cite{CyFrKiSi87}.


With the years, this model has been widely studied, as a source of interesting questions linked to transport phenomena in the translationnally invariance direction. In this note, we describe and complete results on the asymptotics of the band functions, which is  an important step when describing spectral properties at energy near the thresholds.


A key tool for operators having a translation invariance is fibration through partial Fourier transform.
In our case, denote by $\cF_{y}$ the partial Fourier transform in the $y$ variable. Then there holds 
$$\cF_{y}H_{0}\cF_{y}^{\star} =\int^{\oplus} \gh(k)\rd k$$
where $\gh(k)$ is the unidimensional Sturm-Liouville operator defined by
\bel{D:hk}
\gh(k):=-\partial_{x}^2+(a(x)-k)^2
\ee
acting on $L^{2}( \R \Bk)$. It is positive, self-adjoint with compact resolvent.  We  denote by $\{E_{n}(k), n\geq1\}$ the increasing sequence of  its  eigenvalues. They are simple, see \cite[Lemma 2.3]{Iwa85},  therefore the functions $E_n(\cdot)$ are analytic with respect to $k$ on $\R$. They are called the band functions (or dispersion curves) of $H_{0}$.


With the hypotheses assumed here, the band functions $k\mapsto E_{n}(k)$ are increasing and converges to $\Lambda_{n}b^{\pm}$ as $k\to\pm\infty$, where $\Lambda_n:=2n-1$. In this case, the 
 spectrum is obviously purely absolutely continuous. 

The values  $\Lambda_{n}b^{\pm}$ \Bk  are thresholds in the spectrum of $H_{0}$. The nature of these thresholds,  and more refined properties \Bk of the operator (and its perturbations), are deeply link to the behavior of the band functions at these limits. 
 
The trajectory of a classical particle submitted to this kind of magnetic fields is quite easy to picture.  Generically,  the  particles exhibit a drift in the invariance direction $y$. Because the magnetic field varies slowly when $|x|$ is large, if the particle is located initially in such a zone, the drift will be weak, and the trajectory of the particle will be close to circles. For a spinless quantum particle of a given energy, the evolution is linked to the band functions crossing this energy, the velocity in the $y$ direction being related to the derivative of the band functions. If the energy is far from thresholds, the particle is usually called an {\it edge state}, because it will show some propagation, as it is the case for models involved models in Quantum Hall Effect, where edges induce transport. We refer to \cite{ManPur97,PeetRej00,HisSoc14} for study of this case. On the contrary, if the energy is closed to a threshold, the particle will bear a {\it bulk component}, that is a component whose velocity is small. Quantitative estimates on the velocity requires asymptotics of the band functions near thresholds. Information on the eigenfunctions of the fiber operator involved (de)localization of the particle, through an analysis in phase space (see \cite{HisPofSoc14,MirPof18}, and also \cite{dBiePu99} for a rough analysis on a similar model).

\section{Asymptotics of band functions}
In \cite{MirPof18}, it is assumed that the magnetic field converges to its limit like negative power of $x$, the model case being 
$$\exists x_{0}\in \R, \forall x \geq x_{0}, \quad b(x)=b_{+}-\langle x \rangle^{\alpha}, \quad \alpha>0.$$
Our hypotheses bear on the behavior of the magnetic field at $+\infty$ and the asymptotics of the band function when $k\to+\infty$. Symmetric hypotheses and results are of course valid in the other direction.\Bk

Under this condition, the behavior of the band functions is provided in \cite[Theorem 2.2 and Corollary 2.4]{MirPof18}: 
\bel{H:a}
E_{n}(k)=b_{+}\Lambda_{n}-\frac{\Lambda_{n}b_{+}^{M}}{k^{M}}+O\big(\frac{1}{k^{M+2}}\big).
\ee
Here we will consider another physically relevant class of magnetic field, those which are {\it equal} to their limit for large $x$:
\bel{H:b}
\exists x_{\infty}\in \R, \forall x \geq x_{\infty}, \quad b(x)=b_{+}.
\ee

Up to picking the smallest real satisfying the above relation, we also assume that $b(x)<b_{+}$ for $x<x_{\infty}$. 
The case of a piecewise constant magnetic field was treated in \cite{PeetRej00,HisPerPofRay16}, but in that case, the use of special functions allows precise computations of the asymptotics, and are not available in a general context. Moreover, in this article we are interested in more regular magnetic fields.\Bk

We say that the contact at $x_{\infty}$ is of order $p\in \N\cup\{+\infty\}$ when  $b$ is $C^{p}((-\infty,x_\infty)$, $$\lim_{\substack{x\to x_\infty \\ x<x_{\infty}}}b^{(p)}(x):=b^{(p)}(x_{\infty}^{-})\neq0$$ \Bk and $b^{(j)}(x_{\infty})=0$ for all $j=1,\ldots,p-1$. 

The potential in \eqref{D:hk} vanishes at a unique point $a^{-1}(k)=:x_{k}$, $a^{-1}$ being the  inverse function of $a$. The proof of the asymptotics relies on the construction of quasi-modes in the spirit of the harmonic approximation \cite{DiSj99}. Indeed, after the change of variable $x=b_{+}^{-1/2}t+x_{k}$, the operator $\gh(k)$ is transformed in $b_{+}\widetilde{\gh}(k)$, whith
\bel
{D:tildegh}
\widetilde{\gh}(k)=-\partial_{t}^2+W(t,k).
\ee
In both cases, \eqref{H:a} and $\eqref{H:b}$, $W$ has a unique minimum at 0 which is non degenerate, in the sense that $\partial_{t}^2W(0,k)=2$.

Writing $W(t,k)=t^2+d_{k}(t)$, then, in case \eqref{H:a}, $d_{k}$ is not zero near $t=0$,  and $d_{k}(t)=\alpha(k)O(t^3)$, where $\alpha(k)\to0$ as $k\to+\infty$. Therefore perturbation theory provides the asymptotics of the eigenvalues of $\gh(k)$ as $k\to+\infty$. 
But in case \eqref{H:b}, $W(t,k)=t^2$ in a neighborhood of 0 and it is not clear on which quantity depends the asymptotics of the band function, and which is the convergence rate.  We define $a_{\infty}:=a(x_{\infty})$, and
 $$t_{k}:=\sqrt{b_{+}}(x_{\infty}-x_{k})=\frac{a_{\infty}-k}{b_{+}^{1/2}}.$$
Notice that for $k$ large enough, $t_{k}<0$, and $W(t,k)=t^2$ for $t\in (t_{k},+\infty)$.
\Bk
The asymptotics of the band functions is now very different from \eqref{H:a}:  as one expect, the band functions converge faster to its limit, more precisely the quantity $E_{n}(k)-b_{+}\Lambda_{n}$ is now exponentially small as $k\to+\infty$, and the first order term depends only on the contact point, as follows: 

\begin{theorem}
\label{asymptotic_flat_field}
Assume that the contact at $x_{\infty}$ is of order $p \geq1$. Then, as $k\to+\infty$:
\bel{E:asEn}
E_{n}(k)=\Lambda_{n}+C(n,p,b_{+})b^{(p)}(x_{\infty}^{-})k^{2n-p-3}e^{-t_{k}^2}+o(k^{2n-p-3}e^{-t_{k}^2}),
\ee
where $C(n,p,b_{+})=(-1)^{p}\frac{2^{n-p-2}}{\sqrt{\pi}(n-1)!b_{+}^{n-\frac{3}{2}}}$. 

If the contact is of order $\infty$, then 
\bel{A:infinitecontact}
e^{t_{k}^2}(E_{n}(k)-\Lambda_{n})=o(k^{-\infty}).
\ee
\end{theorem}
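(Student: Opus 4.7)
The plan is to treat $\widetilde{\gh}(k) = -\partial_t^2 + W(t,k)$ as a perturbation of the harmonic oscillator $H_{\mathrm{osc}} := -\partial_t^2 + t^2$ on $\R$, whose eigenvalues are the Landau levels $\Lambda_n$ with normalized Hermite eigenfunctions $\phi_n$, and to extract $\widetilde{E}_n(k) := E_n(k)/b_+$ via a Schur--Feshbach projection onto $\phi_n$. Writing $\widetilde{\gh}(k) = H_{\mathrm{osc}} + V_k$ with $V_k(t) := W(t,k) - t^2$, the bound $|a(x) - k| = |\int_{x_k}^x b\,d\tau| \leq b_+ |x - x_k|$, with equality iff $b \equiv b_+$ on the interval of integration, gives $V_k \leq 0$ and $\supp V_k \subset (-\infty, t_k)$. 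Hence $\widetilde{\gh}(k) \leq H_{\mathrm{osc}}$, which already delivers the upper bound $\widetilde{E}_n(k) \leq \Lambda_n$ by min-max.

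Setting $P := |\phi_n\rangle\langle\phi_n|$ and $Q := I - P$, the Schur complement formula yields the identity
\[
\widetilde{E}_n(k) = \Lambda_n + \langle V_k \phi_n, \phi_n\rangle - \big\langle V_k \phi_n,\, \bigl(Q\widetilde{\gh}(k)Q - \widetilde{E}_n(k)\bigr)^{-1} Q V_k \phi_n\big\rangle,
\]
valid whenever $\widetilde{E}_n(k) \notin \sigma\bigl(Q \widetilde{\gh}(k) Q\bigr)$. From the upper bound, the crude asymptotic $\widetilde{E}_j(k) \to \Lambda_j$ (consequence of $V_k \to 0$ in strong resolvent sense), and Cauchy interlacing of the eigenvalues of $Q \widetilde{\gh}(k) Q$ with those of $\widetilde{\gh}(k)$, for $k$ large the spectrum of $Q \widetilde{\gh}(k) Q$ stays at distance $\geq 2 - o(1)$ from $\widetilde{E}_n(k)$, so the Feshbach resolvent is uniformly bounded.

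To evaluate the leading term I use the factorization
\[
V_k(t) = -\frac{1}{b_+}\Big(\int_x^{x_\infty}(b_+ - b(\tau))\, d\tau\Big)\Big(\int_x^{x_k}(b_+ + b(\tau))\, d\tau\Big),\quad x = x_k + b_+^{-1/2}t.
\]
Substituting $s := t_k - t > 0$, using the Taylor expansion $b_+ - b(x_\infty + y) = -b^{(p)}(x_\infty^-) y^p/p! + O(y^{p+1})$ for $y \to 0^-$, and the identity $b_+(x_k - x_\infty) = k - a_\infty = \sqrt{b_+}\,|t_k|$, one obtains
\[
V_k(t_k - s) = \frac{2(-1)^p b^{(p)}(x_\infty^-)}{p!(p+1)\, b_+^{(p+2)/2}}\,|t_k|\, s^{p+1} + O\bigl(|t_k|\, s^{p+2}\bigr).
\]
Combined with the Hermite tail $|\phi_n(t)|^2 \sim \frac{2^{n-1}}{\sqrt{\pi}(n-1)!}\, t^{2(n-1)} e^{-t^2}$ at $|t| \to \infty$ and the elementary computation $\int_0^\infty s^{p+1} e^{-2|t_k| s}\, ds = (p+1)!/(2|t_k|)^{p+2}$, Laplace's method produces
\[
\langle V_k \phi_n, \phi_n\rangle \sim \frac{(-1)^p 2^{n-p-2}\, b^{(p)}(x_\infty^-)}{\sqrt{\pi}(n-1)!\, b_+^{n-1/2}}\,(k - a_\infty)^{2n-p-3}\, e^{-t_k^2},
\]
and multiplication by $b_+$ (together with $(k - a_\infty)^{2n-p-3} \sim k^{2n-p-3}$) reproduces the leading term of \eqref{E:asEn} with constant exactly $C(n,p,b_+)$.

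The main obstacle is to confirm that the Feshbach remainder is genuinely subleading. A parallel Laplace estimate for $\|V_k \phi_n\|^2$ (integrating $V_k^2 \sim |t_k|^2 s^{2(p+1)}$ against $|\phi_n|^2$) yields $\|V_k \phi_n\|^2 = O\bigl(|t_k|^{2n-2p-3} e^{-t_k^2}\bigr)$, smaller than the first-order term by the polynomial factor $|t_k|^{-p}$; together with the uniform resolvent bound, this delivers the $o(k^{2n-p-3} e^{-t_k^2})$ remainder. For the infinite-contact case \eqref{A:infinitecontact}, the Taylor expansion of $b_+ - b$ at $x_\infty^-$ is $o(|y|^N)$ for every $N$, so the same Laplace computation gives $\langle V_k \phi_n, \phi_n\rangle = o(|t_k|^{-N} e^{-t_k^2})$ for every $N$, and the Feshbach remainder is even smaller.
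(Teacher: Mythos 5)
Your computation of the leading term coincides with the paper's: both reduce to the rescaled operator $\widetilde{\gh}(k)=-\partial_t^2+t^2+V_k$ and identify the first correction to $\Lambda_n$ as $\mu_n(k)=\int|\Psi_n|^2V_k$, and your Taylor-expansion-plus-Watson's-lemma evaluation (expanding $b_+-b$ to order $p$ at $x_\infty^-$ and integrating $s^{p+1}e^{-2|t_k|s}$) is just a repackaging of the paper's $p+1$ integrations by parts using $d_k^{(j)}(t_k)=0$ for $j\le p$; I checked that your intermediate formula for $V_k(t_k-s)$ and your final constant both agree with \eqref{calculdp} and $C(n,p,b_+)$. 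Where you genuinely diverge is the control of the remainder: the paper invokes the quasi-mode construction of \cite[Section 2.1]{MirPof18} and explicitly skips the details, whereas you make the error term concrete through the Feshbach--Schur complement, reducing it to the bound $\|V_k\phi_n\|^2=O(|t_k|^{2n-2p-3}e^{-t_k^2})$, which beats the leading term by the factor $|t_k|^{-p}$. This is a real improvement in self-containedness, and it correctly explains why the naive quasi-mode $\phi_n$ alone (which only gives $\dist(\Lambda_n,\sigma(\widetilde{\gh}(k)))\le\|V_k\phi_n\|\sim e^{-t_k^2/2}$, far larger than $\mu_n(k)\sim e^{-t_k^2}$) is insufficient and a second-order mechanism is needed.

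One step you should tighten: the uniform bound on $\bigl(Q\widetilde{\gh}(k)Q-\widetilde{E}_n(k)\bigr)^{-1}$ does not follow from Cauchy interlacing alone. Interlacing places the eigenvalues $\mu_{n-1},\mu_n$ of $Q\widetilde{\gh}(k)Q$ in $[\widetilde{E}_{n-1},\widetilde{E}_n]$ and $[\widetilde{E}_n,\widetilde{E}_{n+1}]$ respectively, so it does not by itself exclude that one of them sits next to $\widetilde{E}_n(k)$. To get the distance $2-o(1)$ you also need that $P=|\phi_n\rangle\langle\phi_n|$ is asymptotically the spectral projection of $\widetilde{\gh}(k)$ onto its $n$-th eigenspace, i.e.\ $\langle\phi_n,\psi_{j,k}\rangle\to\delta_{jn}$ for the true normalized eigenfunctions $\psi_{j,k}$; this is true and follows from the same harmonic-approximation analysis you are already using (e.g.\ from $\|V_k\phi_j\|\to0$ for every $j$ together with the spacing of the limits $\Lambda_j$), but it is an extra ingredient that must be stated. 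With that supplement your argument is complete, and it delivers the infinite-contact case \eqref{A:infinitecontact} as well. (As a side remark, the right-hand side of \eqref{E:asEn} should read $b_+\Lambda_n$ rather than $\Lambda_n$; your normalization $\widetilde{E}_n=E_n/b_+$ handles this correctly.)
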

\begin{proof}
Assume \eqref{H:b} and note that  
\bel{E:xk}
x_{k}=x_{\infty}+\frac{k-a_{\infty}}{b_{+}},
\ee
moreover, $b$ is constant on $(x_{\infty},x_{k})$. 
Recall that
$d_{k}(t)=W(k,t)-t^2$, which  
for all $t\in \R$ can be written as:
\bel{E:dk}
d_{k}(t)=\frac{1}{b_{+}}\int_{x_{k}+\frac{t}{\sqrt{b_{+}}}}^{x_{k}}(b(s)+b_{+})\rd s\int_{x_{k}+\frac{t}{\sqrt{b_{+}}}}^{x_{k}}(b(s)-b_{+})\rd s.
\ee
Note that $|d_{k}(t)| \leq Ct^2$ and that $d_{k}\to0$ point-wise as $k\to+\infty$. Therefore, we will approximate $\widetilde{\gh}(k)$ by the harmonic oscillator $-\partial_{t}^2+t^2$ as $k\to+\infty$. The same computations as in \cite[Section 2.1]{MirPof18}, based of construction of quasi-modes for $\widetilde{\gh}(k)$ from Hermite's functions, and estimations of remainders, applies. Denote by $\widetilde{E_{n}}(k)$ the $n$-th eigenvalue of $\widetilde{\gh}(k)$, then (we skip the details for brevity), as $k\to+\infty$:
\bel{A:Entilde}
\widetilde{E_{n}}(k)=\Lambda_{n}+\mu_{n}(k)+o(\mu_{n}(k)),
\ee
where 
\bel{E:munini}
\mu_{n}(k)=\int_{\R}\Psi_{n}(t)^2d_{k}(t) \rd t =\int_{-\infty}^{t_{k}}P_{n}^2(t)e^{-t^2} d_{k}(t) \rd t,
\ee
$\Psi_{n}$ being the $n$-th normalized Hermite's function, starting from $n=1$, and $P_{n}$ the associated $n$-th Hermite's polynomial. Since $P_{n}$ is of degre $n-1$ with leading coefficient $\gamma_{n}:=(2^{n-1}/((n-1)!\pi^{1/2}))^{1/2}$, we get
\bel{E:muniniequiv}
\mu_{n}(k)\underset{k\to+\infty}{\sim} \gamma_{n}^2\int_{-\infty}^{t_{k}}t^{2n-2}e^{-t^2}d_{k}(t)\rd t.
\ee
Relation \eqref{E:dk} and the definition of $t_{k}$ provides $d_{k}^{(j)}(t_{k})=0$ for all $j=1,\ldots,p$, moreover $|d_{k}^{(p+2)}(t)|\leq C_{p}|t|$ with $C_{p}>0$. Therefore, using $(p+1)$ integrations by parts, we get: 
$$\int_{-\infty}^{t_{k}}t^{2n-2}e^{-t^2}d_{k}(t)\rd t\underset{k\to+\infty}{\sim} -t_{k}^{2n-2}\frac{e^{-t_{k}^2}}{(2t_{k})^{p+2}}d^{(p+1)}(t_{k}).$$
Now, from \eqref{E:xk}:
\begin{equation}
\label{calculdp}
d_{k}^{(p+1)}(t_{k})=-2(k-a_{\infty})b_{+}^{-\frac{p+3}{2}}b^{(p)}(x_{\infty}^{-}).
\end{equation}
Therefore, by  the definition of $t_{k}$:

$$\mu_{n}(k)\underset{k\to+\infty}{\sim} \frac{2^{n-p-2}}{\sqrt{\pi}(n-1)!b_{+}^{n-\frac{1}{2}}}b^{(p)}(x_{\infty}^{-})k^{2n-p-3}e^{-t_{k}^2}.$$
Using \eqref{A:Entilde} and $E_{n}(k)=b_{+}\widetilde{E_{n}}(k)$, we get \eqref{E:asEn}. 

When the contact is of infinite order, we have $d^{(j)}(t_{k})=0$ for all $j\in \N$. Then, \eqref{A:infinitecontact} follows easily from \eqref{E:munini}.
\end{proof}
\begin{remark}
In case of an infinite contact point, the proof shows that the asymptotics of the band function is still given by $E_{n}(k)=b_{+}\Lambda_{n}+b_{+}\mu_{n}(k)+o(\mu_{n}(k))$, where $\mu_{n}$ is given in \eqref{E:munini}. For $k$ large enough, $\mu_{n}(k)<0$, but there is no natural expansion for this quantity without additional hypotheses. 
\end{remark}


\section{Consequences near the thresholds}
\subsection{Bulk states}
The current operator is defined as the commutator $J_{y}:=-i[H_{0},y]$, on $\dom(H_{0})$. The evolution through the unitary group defined by $H_{0}$ of this self-adjont operator is the velocity in the $y$ direction, indeed, defining $y(t):=e^{-itH_{0}}ye^{itH_{0}}$, the evolution of the position along the $y$ direction, there holds
$$\frac{\rd y(t)}{\rd t}=e^{-iH_{0}}J_{y}e^{itH_{0}}.$$
In \cite{GeNi98}, the authors use this commutator to establish a Mourre estimate for general analytically fibered Hamiltonian for any energies except for a discrete set, called thresholds. Their techniques  relies on the stratification of the projection from the Bloch Variety into the spectrum, according to the algebraic multiplicity of values in the spectrum, the thresholds being the set of dimension 0 in the spectrum associated with this stratification, see \cite[Definition 3.2]{GeNi98}. In some sense they provide a general estimate from below from the current: it is bounded from below at energies away from thresholds. 

Iwatsuka models, among others magnetic models, have the property that their fibers operators are 1d, and the algbraic machinery described above can be avoided by controlling the commutator in a more direct way. The key tool to do this is the Feymann-Hellman formula. Given a function $\varphi\in\dom(H_{0})$ (see \cite[Section 3]{MirPof18} for a precise definition), there holds:
$$\langle J_{y}\pi_{n}\varphi,\pi_{n}\varphi \rangle=\int_{k\in \R}|\pi_{n}\varphi(k)|^2 \lambda_{n}'(k) \rd k,$$
where $\pi_{n}$ is the projection along the $n$-th harmonic, see \cite[Section 5]{ManPur97} and \cite{HisSoc14}.

Therefore, estimates on the derivative on the band function turn into control on the current operator, more precisely, if we assume in addition that $\varphi$ is localized in energy in an interval $I$, then 
\bel{estimecourant} \inf_{E_{n}^{-1}(I)}E_{n}' \leq \frac{\langle J_{y}\pi_{n}\varphi,\pi_{n}\varphi\rangle}{\|\pi_{n}\varphi\|^2} \leq \sup_{E_{n}^{-1}(I)}E_{n}'.
\ee
If thresholds are defined as set of energies for which the derivative of the band function can be small, then it becomes obvious that the current is bounded from below, away from thresholds.
 In case where the band functions are proper, such energies correspond to critical point of band functions. Note that being proper is also an hypothesis from \cite{GeNi98}. But in our case, the band functions tends to finite limit, giving rise to a different kind of thresholds $\cT:=\{b_{\pm}\Lambda_{n}, n\geq1\}$. For a particle whose energy interval contains a threshold, no bound from below is available for the current. A more quantitative approach is given in \cite{HisPofSoc14}: we consider an energy interval $I$ at a small distance from the set of threshold $\cT$. Then \eqref{estimecourant} shows that a precise asymptotics of $E_{n}$ and its derivative provide a good control on the current of states localized in energy in $I$. Following this strategy, it is proved in \cite{MirPof18} that in case \eqref{H:a}, when $I=(\lambda_{n}-\delta_{2},\Lambda_{n}-\delta_{1})$, then, as $\delta_{i}\to0$:
 $$\delta_{1}^{1+\frac{1}{M}} \lesssim \frac{\langle J_{y}\pi_{n}\varphi,\pi_{n}\varphi\rangle}{\|\pi_{n}\varphi\|^2} \lesssim \delta_{2}^{1+\frac{1}{M}}.$$
In the case where the magnetic field satisfies \eqref{H:b}, then the strategy is similar. First, for a fixed $\delta>0$ small enough, the equation $\lambda_{n}-\delta=E_{n}(k)$ has a unique solution $k(\delta)$, which satisfies, $k(\delta)=\sqrt{|b_{+}\log\delta|}+o(1)$ as $\delta\to0$.

Next one needs to show that the asymptotics  of $E_{n}'(k)$ can be derived from \eqref{E:asEn}. Since the magnetic field does not satisfies any analyticity hypothesis, no method based on special functions can be used (as it was the case in \cite{GeSe97,HisPerPofRay16}). A direct method is to use intergal formula for $E_{n}'(k)$ and a refined asymptotics of the eigenfunction, as done in \cite[Section 2.3]{MirPof18}. We do not explained this in full details here. As a consequence, we get 
$$E_{n}'(k(\delta)) \underset{\delta\to0}{\sim}\delta \sqrt{|\log\delta|}$$
and therefore the estimates on the current follows: 
 $$\delta_{1} \sqrt{|\log\delta_{1}|} \lesssim \frac{\langle J_{y}\pi_{n}\varphi,\pi_{n}\varphi\rangle}{\|\pi_{n}\varphi\|^2} \lesssim \delta_{2} \sqrt{|\log\delta_{2}|},$$
showing the difference with case \eqref{H:a}. Estimations on the localizations on states localized in this interval are posible, as done in \cite[Section 3.2]{MirPof18}.

\subsection{Perturbations}
Here we describe other possible applications of the asymptotics of band functions, without entering the details. 

Giving a measurable sign-definite potential $V:\R^{2}\to \R$ going to 0 at infinity, a physically relevant question concerns the effect of this perturbation on the system. Some of the classical trajectories may become bounded, corresponding to {\it trapped modes}. These one correspond in the quantum system to eigenvalues of the operator $H_{0}+V$. If there are gaps in the  essential \Bk spectrum of $H_{0}$, then under suitable assumptions on $V$, these gaps remains the same. But discrete \Bk eigenvalues can appear inside these gaps. Finiteness (or asymptotics) of these eigenvalues is an important topics which has received a lot of attention. For example, the general case of thresholds corresponding to critical points has been described in \cite{Rai92}, but in our case, these technics does not apply since the thresholds are limit of band functions. These cases are treated under various hypotheses on the potential in \cite{shi03,shi05,Mir16}. 
A delicate extension of these question concerns the behavior of the spectral shift function for the pair $(H_{0}+V,H_{0})$. This function can be used to describe the counting function of eigenvalues outside the essential spectrum, and its singularity at thresholds is a natural question. This problematic has been treated in \cite[Section 4]{MirPof18} for the Iwatsuka model, under condition \eqref{H:a}.

 In case \eqref{H:b}, the exponential convergence of the band functions toward their limits is closer to half-planes model with constant magnetic field. Therefore the precise behavior of the eigenvalues counting function, and of the SSF, can be obtained by adapting the methods from \cite{BruMirRai13,BruMir16}, using the asymptotics \eqref{E:asEn} for $E_{n}$ (and its derivatives). 



\bibliographystyle{plain}
\bibliography{bibliopof}
\end{document}